\documentclass[11pt]{article}
\usepackage{color}
\usepackage{relsize}
\setlength{\oddsidemargin}{0mm} \setlength{\evensidemargin}{0mm}
\setlength{\topmargin}{-15mm} \setlength{\textheight}{220mm}
\setlength{\textwidth}{155mm}
\usepackage{amsmath,amsthm,amssymb}
\usepackage{amsfonts}
\usepackage{enumerate}
\usepackage{appendix}

\usepackage{graphicx}

\newtheorem{thm}{Theorem}[section]

\newtheorem{theorem}[thm]{Theorem}
\newtheorem{corollary}[thm]{Corollary}
\newtheorem{lemma}[thm]{Lemma}

\newtheorem{example}[thm]{Example}

\theoremstyle{definition}

\newtheorem{definition}[thm]{Definition}

\numberwithin{equation}{section}
\def\bR{\mathbb{R}}

\def\om{\Omega}

\def\p{\partial}
\def\P{\mathbb{P}}
\def\R{\mathbb{R}}
\def\bR{\mathbb{R}}


\theoremstyle{remark}
\newtheorem{rem}{Remark}[section]
\newtheorem{remark}[rem]{Remark}

\usepackage[colorlinks, linkcolor=black, citecolor=black]{hyperref}

\begin{document}
	
	\title{Backward Uniqueness for 3D Navier-Stokes Equations with Non-trivial Final Data and Applications}
	
	\author{Zhen Lei \footnotemark[1]\ \footnotemark[2]
		\and Zhaojie Yang    \footnotemark[1]\ \footnotemark[3]
		\and Cheng Yuan  \footnotemark[1]\ \footnotemark[4]
	}
	\renewcommand{\thefootnote}{\fnsymbol{footnote}}
	\footnotetext[1]{School of Mathematical Sciences; LMNS and Shanghai Key Laboratory for Contemporary Applied Mathematics, Fudan University, Shanghai 200433, P. R.China.} \footnotetext[2]{Email: zlei@fudan.edu.cn}
	\footnotetext[3]{Email: yangzj20@fudan.edu.cn}
	\footnotetext[4]{Email: cyuan22@m.fudan.edu.cn}
	
	\date{\today}
	
	\maketitle
	
	\begin{abstract}
		Presented is a backward uniqueness result of bounded mild solutions of 3D Navier-Stokes Equations in the whole space with non-trivial final data. A direct  consequence is that a solution must be axi-symmetric in $[0, T]$ if it is so at time $T$.  The proof is based on a new weighted estimate which enables to treat terms involving Calderon-Zygmund operators. The new weighted estimate is expected to have certain applications in control theory when classical Carleman-type inequality is not applicable.
	\end{abstract}

	\section{Introduction}\label{intro}
	
	In this paper, we consider the backward uniqueness problem for the following 3D Navier-Stokes equations in $\R^3\times [0,T]$:
	\begin{equation}\label{ns}
		\begin{cases}
			&\partial_{t}u - \Delta u + (u\cdot \nabla)u + \nabla p = f,\\
			&\nabla\cdot u = 0,\\
			&u|_{t=T} = g(x),
		\end{cases}			
	\end{equation}
	where $u$ denotes the velocity field, $p$ is the pressure, and $f$ is the forcing term. Here, we are interested in bounded mild solutions. Hence, it is assumed that the final data $g$ is a bounded function, and the existence of bounded mild solution $u$ is a known fact. More precisely, one has
	\begin{equation*}
		p = \mathcal{K}:(u\otimes u - F),
	\end{equation*}
	and
	\begin{equation}\nonumber
		u(t) = e^{(t-s)\Delta} u(s) +  \int_{s}^{t} e^{(t-\tau)\Delta} \mathbb{P}\big[\nabla\cdot(u\otimes u)(\cdot, \tau) + f(\cdot, \tau)\big]d\tau,\quad 0 \leq s \leq  t \leq T,
	\end{equation}
	with $f = \nabla\cdot F$, where $\mathcal{K} = (-\Delta)^{-1}\nabla^2$ is the standard Riesz operator and $\mathbb{P}$ is the Leray projector. For more details, see Section 2.

	The backward uniqueness (BU) problem asks whether there are two different bounded mild solutions $u_{1}$ and $u_{2}$ to \eqref{ns} with the same final data $g$ at some time $T\in(0,+\infty)$. In this article we are interested in the case of non-trivial final data $g$. We say that the (BU) property holds if $u_1$ and $u_2$ must coincide.
	
	\subsection{Background}

	The (BU) property is closely connected to
	internal control theory.  In fact, if $u_1, u_2$ are two different bounded mild solutions to \eqref{ns}, denote  $u:=u_2-u_1$. It is clear that $u$ satisfies the following modified Navier-Stokes equation with internal control:
	\begin{equation}\label{ns1}
		\begin{cases}
			&\p_{t}u + u\cdot\nabla u- \Delta u + \nabla \pi = -u_1\cdot \nabla u-u\cdot\nabla u_1,\\
			&\nabla\cdot
			u=0,\\
			&u|_{t=t_{0}}=u_{0}.
		\end{cases}
	\end{equation}
	Indeed, by forgetting about the relation between $u$ and $u_1$, one can think of $u_1$  as an internal control function.
	That the (BU) property holds means  that under suitable constraints on the control functions $u_1$, the  \emph{null controllability} for \eqref{ns1} is impossible, that is, for any nontrivial data $u(t_{0})$, \eqref{ns1} can not achieve the trivial state $u(\cdot, T)\equiv 0$ for any $T > t_0$. 
	
	Besides, the study of backward uniqueness has its own significance in the PDE's field, especially in the regularity theory of parabolic equations. For instance, the regularity theory of Navier-Stokes equations in critical spaces (\cite{2002On},  \cite{Carlos2011An}, \cite{2019Quantitative}, \cite{Palasek_2021}, \cite{Palasek_2022}), regularity of harmonic maps \cite{2008W}, and blow up for semi-linear heat equation \cite{2011MN}. 
	
	In the literature, the backward uniqueness \& unique continuation properties of linear parabolic equations have been widely considered.  Lin \cite{1990A} proved the strong unique continuation for linear parabolic equations of the form $\p_{t}u-\Delta u+V(x)u=0$, where  time-independent coefficients $V(x)\in L_{x}^{(n+1)/2}$. See also \cite{Escauriaza2000Carleman}, \cite{0Carleman} for results on time-dependent potentials. For equations of the form $\p_{t}u-\Delta u+a(t,x)u+b(t,x)\cdot\nabla u=0$, Poon \cite{1996P} proved that if $a, b$ are bounded, the bounded solution $u$ in $\mathbb{R}^n \times \mathbb{R}^+ $ is identically zero if it vanishes of infinite order at any point.
	See also \cite{1998A} for improved results allowing a suitable growth condition on $u$ and \cite{1997C1},  \cite{2003Unique} for analogous results on divergence-form backward operator $P=\p_{t}+\p_{i}(a^{ij}(x,t)\p_{j})$.

	In the remarkable work \cite{2002On}, Escauriaza, Seregin and Sverák proved the global regularity of $L^\infty L^3$ solutions of \eqref{ns} with null force, where backward uniqueness arguments (proved in \cite{2004Backward}) played a crucial role. Indeed, they proved that if a function $u$ satisfies $|\Delta u+\p_{t}u|\lesssim (|u|+|\nabla u|)$ and a certain growth condition in $\R^{n}\setminus B_{R}\times [0,T]:=Q_{R,T}$, then $u(x,0)\equiv 0$ in $\R^{n}\setminus B_{R}$ implies $u(x,t)\equiv 0$ in $Q_{R,T}$. The result can be applied to prove the vanishing of the vorticity $\omega\triangleq\nabla \times u$ in $Q_{R,T}$ for large $R$ provided $\omega$ vanishes at $t=T$. Their proof relies on some Carleman-type inequalities, which have also been widely used in control theory. See also \cite{2012LSV} for the use of Carleman estimates proving backward uniqueness of parabolic equations in a cone and \cite{2019Quantitative},  \cite{Palasek_2021}, \cite{Palasek_2022} for proving quantitative regularity and blowup criterion for Navier-Stokes equations.
	
	
	In addition to the important role of (BU) property played in the regularity theory of Navier-Stokes Equations, we point out that the (BU) property with \emph{non-trivial final data} studied in this article has its own obvious significance. For instance, one may ask the following question: 
	
	
	\begin{center}{(Q)}
		Suppose that $f = 0$ and $u(t, x)$ is a bounded mild solution to the Navier-Stokes equations with $u(T, x)$ being an axi-symmetric vector   without swirl. Must $u(t, x)$ be axi-symmetric and swirl-free?
	\end{center}
	To answer this question, one has to first answer whether $u$ is axi-symmetric and then apply standard backward uniqueness result after various surgeries. The proof will be presented after stating the main Theorem \ref{main}. Let us emphasis that a crucial difference from the trivial final data case is that one has to treat terms involving Calderon-Zygmund operators. Since the classical Carleman type inequality involves the exponential weight which does not belong to $\mathcal{A}_{p}$ class for any $p \in [1, \infty]$, we need find new weight to overcome this essential difficulty. The new weighted estimate is expected to have certain applications in control theory when classical Carleman-type inequality is not applicable.
	\subsection{Main Result}
	In this paper, we will prove the backward uniqueness of forced Navier-Stokes equations \eqref{ns} for bounded mild solutions, with nontrivial final data.  Our main result is the following theorem:
	\begin{thm}\label{main}
		Assume $u_1(x,t), u_2(x,t)$ are two bounded mild solutions of (\ref{ns}), such that $\nabla \times u_1$ and $\nabla \times u_2$ are also bounded. If $u_1(x,T)= u_2(x,T)$ for some $T>0$, then $(u_1, \nabla p_1)$ must coincide with $(u_2, \nabla p_2)$ at any time $0\le t\le T$.	
	\end{thm}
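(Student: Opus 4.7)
\emph{Setup.} Set $w = u_1 - u_2$, $q = p_1 - p_2$, and $\Omega = \nabla\times w = \omega_1 - \omega_2$. Subtracting the two Navier--Stokes systems yields
\begin{equation*}
\partial_t w - \Delta w + u_1 \cdot \nabla w + w \cdot \nabla u_2 + \nabla q = 0,\quad \nabla \cdot w = 0,\quad w(T) = 0,
\end{equation*}
where $\nabla q$ is, up to derivatives, the Calder\'on--Zygmund image of $u_1\otimes w + w\otimes u_2$ and hence nonlocal in $w$. Taking the curl eliminates the pressure and gives a parabolic equation for $\Omega$ whose coupling coefficients involve $\nabla u_i$ --- which must be recovered from the bounded $\omega_i$ via the Biot--Savart law, itself another Calder\'on--Zygmund operator. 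The goal is to force $(w,\Omega)\equiv 0$ on $[0,T]$.

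\emph{Why a new weight is needed.} I would follow the Carleman/log-convexity approach of Escauriaza--Seregin--\v{S}ver\'ak \cite{2002On,2004Backward}, testing in weighted $L^2$. The essential obstruction is that the classical exponential or Gaussian weight used there lies outside every Muckenhoupt class $\mathcal{A}_p$; consequently the Calder\'on--Zygmund operators producing $\nabla q$ and $\nabla u_i$ are not bounded in the corresponding weighted $L^2$, and the standard Carleman identity does not close. The plan is to replace the classical weight by a weight $\rho(t,x)$ in $\mathcal{A}_2$ uniformly in $t$ --- polynomial or sub-Gaussian in $x$ rather than Gaussian --- which yields the key weighted bounds $\|\nabla q\|_{L^2(\rho\, dx)} \lesssim \|w\|_{L^2(\rho\, dx)}$ and $\|\nabla u_i\|_{L^2(\rho\, dx)} \lesssim \|\omega_i\|_{L^2(\rho\, dx)}$.

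\emph{Closing the Carleman identity.} With this weight, I would differentiate $F(t) = \int |w|^2\rho\,dx$: integrate by parts the transport term $u_1\cdot\nabla w$ against $\rho$, absorb the stretching term $w\cdot\nabla u_2$ and the pressure term via the weighted CZ bounds together with the boundedness of $u_i,\omega_i$, and keep $2\int|\nabla w|^2\rho\,dx$ on the good side. An analogous identity for $\Omega$ closes the loss of one derivative introduced by Biot--Savart. The target is a log-convexity inequality of Poon type $\frac{d^2}{dt^2}\log F(t) \geq -C$, which combined with $F(T)=0$ forces $F\equiv 0$ on some interval $[T-\delta,T]$; a standard iteration then propagates vanishing back to $t=0$, yielding $w\equiv 0$ and hence $\nabla p_1\equiv\nabla p_2$.

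\emph{Main obstacle.} Everything hinges on designing $\rho$ that simultaneously (i) lies in $\mathcal{A}_2$ with constants uniform in the Carleman parameters, (ii) produces the correct sign for the commutator $\partial_t\rho + \Delta\rho$ and for the boundary terms generated by integrating $u_1\cdot\nabla w$ by parts, and (iii) grows fast enough at spatial infinity that the log-convexity argument forces $F\equiv 0$. Pure power weights such as $(1+|x|^2)^k$ lie in $\mathcal{A}_2$ but grow too slowly for the classical two-sphere Carleman scheme, while pure Gaussian weights grow fast enough but leave every $\mathcal{A}_p$. Reconciling these two requirements is precisely the new weighted estimate announced in the abstract, and it is where I expect the bulk of the technical work to lie.
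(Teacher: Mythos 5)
Your high-level diagnosis is right: exponential/Gaussian Carleman weights lie outside every $\mathcal{A}_p$ class, so the Calder\'on--Zygmund operators coming from the pressure cannot be absorbed with the classical scheme, and some new weight is needed. But the route you sketch from there has a gap that the paper's argument is designed to circumvent, and your plan as written cannot close.

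The gap is in your requirement that the new weight $\rho$ lie in $\mathcal{A}_2$. Because $u_1,u_2$ are merely bounded with no spatial decay, the quantity $F(t)=\int |w|^2\rho\,dx$ is finite only if $\rho$ is integrable over $\mathbb{R}^3$, i.e.\ $\rho(x)\lesssim (1+|x|^2)^{-k}$ with $k>3/2$. But no integrable weight on $\mathbb{R}^n$ can be $\mathcal{A}_2$: the $\mathcal{A}_2$ condition fails on large balls since the average of $\rho^{-1}$ blows up while the average of $\rho$ stays $O(|B|^{-1})$, and indeed $(1+|x|^2)^{-k}\in\mathcal{A}_2(\mathbb{R}^3)$ precisely when $k<3/2$. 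So the two properties you ask for --- $\mathcal{A}_2$ to apply weighted CZ bounds, and decay to make $F$ finite --- are mutually exclusive, and ``reconciling these two requirements'' is not a matter of technical work but an impossibility. The paper resolves this differently: it accepts a non-$\mathcal{A}_p$ weight $(1+|x|^2)^{-k}$ with $3/2<k<5/2$ and recovers the weighted CZ bound \emph{only for terms with divergence structure}, via an explicit kernel decomposition and integration by parts (Lemma~\ref{lem-weight}). That lemma --- not an $\mathcal{A}_p$ result --- is the mechanism that closes the pressure estimate, and it is the ingredient your plan is missing.

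Two further, less fundamental, divergences from the paper. First, the paper never takes the curl: it keeps the velocity equation, writes the pressure as $\mathcal{R}\nabla\cdot(u_1\otimes w + w\otimes u_2)$, and feeds that directly into Lemma~\ref{lem-weight}. Your curl route would remove the pressure but reintroduce nonlocality through Biot--Savart ($\nabla u_i$ from $\omega_i$), which is the same kind of CZ operator and would meet the same $\mathcal{A}_p$ obstruction; it also loses the divergence structure that the lemma exploits, so it is not obviously recoverable. (The paper handles the $\nabla u_2$ factor in $A_{12}$ by rewriting $|\nabla u_2|^2=\Delta(|u_2|^2)-u_2\cdot\Delta u_2$ and integrating by parts against the polynomial weight, which is why only $\|u_i\|_\infty$ and $\|\nabla\times u_i\|_\infty$ appear in the final bound.) Second, the large parameter in the paper does not come from spatial growth of the weight --- the spatial weight \emph{decays} --- but from the time weight $h(t)^{-2a}$ with $h(t)=te^{-t}$, and the conclusion follows by sending $a\to\infty$ in the resulting Carleman-type inequality (Theorem~\ref{weight}), rather than from a Poon-style log-convexity inequality $\frac{d^2}{dt^2}\log F\geq -C$. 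Both mechanisms are legitimate backward-uniqueness tools, but your statement that the weight must ``grow fast enough at spatial infinity'' runs in the wrong direction for bounded solutions.
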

	\begin{remark}Our result only requires the vorticity to be bounded, this assumption is weaker than several classical results, such as in \cite{1996P}.  It is clear that  the boundedness condition on vorticity can be removed if $f$ vanishes or has certain regularity. For instance, if  $f=\nabla\cdot F$ with $F\in L^{\infty}_{t}W^{1, p}_x$ for $3< p\leq \infty $, then one can easily derive that $u_1, u_2\in \mathbf{C}\big((0, T], W^{1,\infty}(\R^3)\big)$ (see \cite{2009SS}).
	\end{remark}
	Note that the \emph{mild} assumption on the solutions in  Theorem \ref{main} is necessary. In view of the following simple counter-example, one can not expect that the (BU) properties holds true for general $L^\infty$ solutions:
	\begin{example}\label{counter}
		Let  $u(x,t)=(h(t),0,0)$, with $h(T)=0, h'(T)=0$, and $ h(t)\not\equiv 0$ for $t\neq T$. Let $p(t,x)=-h'(t)x_{1}$, then $(u,p)$ is a classical solution of (\ref{ns}) vanishing at $t=T$, and is obviously nontrivial. 
	\end{example}
	
	
	The backward uniqueness holds for a large class of solutions of \eqref{ns} such that the Ladyzhenskaya–Prodi–Serrin regularity criteria holds. Indeed, the boundedness of $u$  is a (non-trivial) consequence of these conditions stated below. For instance, see  \cite{1982CKN}, \cite{2002On}, or \cite{2016RG} for proofs.
	\begin{corollary}\label{coro}
		Assume $u(x,t),v(x,t)$ are mild solutions of \eqref{ns} with $f = 0$ such that $u(x,t),v(x,t)\in L^{p}\big([0,T];L^{q}(\R^{3})\big)$ for $2/p+3/q=1, 3 \leq q \leq \infty$ Then, $u(x,t)$ must coincide with $v(x,t)$ at any time $0\le t\le T$,  provided that $u(x,T)\equiv v(x,T)$ for some $T>0$. 
	\end{corollary}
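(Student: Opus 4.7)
The approach is to reduce Corollary~\ref{coro} to Theorem~\ref{main} via the Ladyzhenskaya--Prodi--Serrin regularity theory. The theorem's hypotheses demand that both $u,v$ and their vorticities be bounded; the plan is to verify these conditions on every strip $[\epsilon,T]\times\R^{3}$ with $\epsilon\in(0,T)$ and then pass $\epsilon\to 0^{+}$.

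For Step~1 (regularity upgrade), fix $\epsilon\in(0,T)$. Under the assumption $u,v\in L^{p}\big([0,T];L^{q}(\R^{3})\big)$ with $2/p+3/q=1$ and $3\le q\le\infty$, the classical Serrin-type bootstrap of the mild formulation
\[
u(t)=e^{(t-\epsilon)\Delta}u(\epsilon)+\int_{\epsilon}^{t}e^{(t-\tau)\Delta}\mathbb{P}\,\nabla\cdot(u\otimes u)(\tau)\,d\tau
\]
yields smoothness of $u,v$ on $[\epsilon,T]\times\R^{3}$ together with uniform pointwise bounds on $u$ and all its spatial derivatives, as explained in \cite{1982CKN,2002On,2016RG}. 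In particular $u,v,\nabla\times u,\nabla\times v\in L^{\infty}\big([\epsilon,T]\times\R^{3}\big)$, so the hypotheses of Theorem~\ref{main} are met on the shifted strip.

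For Step~2 (invocation) and Step~3 (passage to $t=0$), applying Theorem~\ref{main} on $[\epsilon,T]$ with common terminal data $u(\cdot,T)=v(\cdot,T)$ gives $u\equiv v$ on $[\epsilon,T]$. Since $\epsilon\in(0,T)$ was arbitrary, this yields $u\equiv v$ on $(0,T]\times\R^{3}$. Finally, the continuity of mild solutions at $t=0^{+}$ in $L^{q}_{\mathrm{loc}}(\R^{3})$ (or merely in the sense of distributions, as implied by the integral formulation) propagates the equality down to $t=0$, completing the proof.

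I expect the main obstacle to be the regularity upgrade of Step~1 at the critical endpoints $q=3$ and $q=\infty$: naive heat-kernel iteration loses logarithmic factors in these scalings, so one must appeal to the $\varepsilon$-regularity machinery of \cite{2002On} to extract uniform bounds on $u$ and $\nabla\times u$ away from the initial time. Once this technical input is in hand, the remainder of the argument is a direct quotation of Theorem~\ref{main} and requires no new ingredients.
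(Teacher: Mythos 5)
Your proposal matches the paper's intended argument: the paper gives no separate proof of Corollary \ref{coro}, but the preceding paragraph and Remark 1.2 make clear that the corollary is meant to follow exactly as you describe — the Ladyzhenskaya–Prodi–Serrin condition upgrades $u,v$ to bounded solutions with bounded vorticity on $[\epsilon,T]$ (citing \cite{1982CKN}, \cite{2002On}, \cite{2016RG}, with the vorticity bound free since $f=0$), after which Theorem \ref{main} applies and one lets $\epsilon\to 0^{+}$. Your attention to the critical endpoint $q=3$ (requiring the $\varepsilon$-regularity machinery of \cite{2002On}) is exactly the "non-trivial consequence" the paper alludes to, so the proposal is correct and essentially identical to the paper's route.
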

	
	\begin{rem}
		The regularity assumptions on $u, v$ and $f$ in Corollary \ref{coro} can be replaced by various functional frameworks. See \cite{2016RG} for a comprehensive survey on the regularity criteria of \eqref{ns}.
	\end{rem}
	
	Now let us first assume the validity of Theorem \ref{main} and present the answer of $(Q)$.
	\begin{proof}
		First of all, we show that $u(t, x)$ is also axi-symmetric for all $t \in [0, T]$. Recall that a vector $u$ is called axi-symmetric if
		$$u(x) = u^re_r + u^\theta e_\theta + u^ze_z,$$
		where $u^r, u^\theta,  u^z$ are all functions of $(r, z)$ but not depend on $\theta$, with $r = \sqrt{x_1^2 + x_2^2}$, $\theta = \arctan\frac{x_2}{x_1}$, and $z = x_3$.
		Then any rotation by $\phi$ degrees around z-axis can be described by the matrix
		$$
		S_{\phi}=\begin{pmatrix}
			\cos \phi & \sin \phi & 0 \\
			-\sin \phi & \cos \phi & 0 \\
			0 & 0 & 1
		\end{pmatrix}.
		$$
		It is easy to check that the Navier-Stokes equation is invariant under axi-symmetric transformation. Hence if $u(x, t)$ is a solution of the N-S equation, then so does $ u_\phi := S_\phi u(S_{-\phi} x, t)$ . Since $u(x, T)$ is an axi-symmetric vector, it holds that
		$$
		S_\phi u(S_{-\phi} x, T) = u(x, T),
		$$
		or equivalently $u_\phi(x, T) = u(x, T)$ for any $\phi \in [0, 2\pi]$. By Theorem \ref{main}, we have $u_\phi(x, t) = u(x, t)$ for any $t < T$, which implies that $u(x, t)$ is axi-symmetric for all $\phi \in [0, 2\pi]$ and $t<T$.

		Next, denote $G = \frac{u^\theta}{r}$. It is clear that $G$ is governed by (\cite{2008HL, 2008HLL})
		$$G_t - \Delta G - \frac{2}{r}\partial_rG + b\cdot\nabla G + \frac{2u^r}{r}G = 0,$$
		with $b=u^r e_r + u^z e_z. $ Since here we are dealing with bounded mild axi-symmetric solutions, it is clear that $b$ and $\frac{u^r}{r}$ are bounded.
		Similarly as in \cite{2008HLL, 2015L}, one may look on the above equation as a five-dimensional one and derive that
		$$|G_t - \Delta_5G| \leq C(|G| + |\nabla G|).$$
		Here 
		$$\Delta_5 = \Delta + \frac{2}{r}\partial_r,\quad \Delta = \partial_r^2 + \frac{1}{r}\partial_r + \partial_z^2.$$
		Then one may apply classical backward uniqueness result (\cite{1996P}) to derive that $G \equiv 0$.
		
		Hence, the answer of (Q) is yes.
	\end{proof}

	\subsection{Main Ingredients}
	Now let us explain the main difficulties of this paper and the strategies used to overcome them. \\
	\textit{(a) Invalidity of Carleman estimates.} We emphasize that those above-mentioned literature do not need to treat terms involving the Calderon-Zygmund operator and hence Carleman-type inequality can be applied. Since  exponential-type weights appeared in Carleman type estimates do not belong to any $\mathcal{A}_p$-class for  $1 < p < \infty$, one can not apply Carleman type estimates to treat the terms involving Calderon-Zygmund operator. Unfortunately, for \eqref{ns} when the final data is nontrivial, the Calderon-Zygmund operator will inevitably comes in.  
	
	To overcome it, we establish a new weighted estimate (see Theorem \ref{weight}).  Compared with the Carleman estimate in \cite{2004Backward}, we use the  weights with polynomial spatial decay. This idea behind is the following: the Carleman inequality in \cite{2004Backward} is applied to the heat equation whose kernel has exponential decay. Hence, weights with exponential decay appear naturally in \cite{2004Backward} in view of the heat kernel. In the current setting, due to the presence of the Stokes operator, whose corresponding semi-group only enjoys an algebraic decay rate, it is natural to choose a polynomial type weight $\langle x\rangle^{-k}$.\\
	\textit{(b) Invalidity of $\mathcal{A}_{p}$ weights.} Since the solutions we dealt with here are bounded mild solutions and may not have any decay when $x$ tends to infinity, the spatial weight $\langle x\rangle^{-k}$ in our proof must be integrable over the whole space, which implies $k>3$. On the other hand, due to the appearance of the pressure term, which produces non-local effects, we have to control terms like
	\begin{equation}\label{intro-weight}
		\int_{\R^{3}}\langle x\rangle^{-k}|\mathcal{R}f|^{2}dx\lesssim \int_{\R^{3}}\langle x\rangle ^{-k}|f|^{2}dx,
	\end{equation}
	where $\mathcal{R}$ is a Calderon-Zygmund operator. Unfortunately, when $k>3$, $\langle x\rangle^{-k}$ does not belong to any $\mathcal{A}_p$ class (see \cite{2014GL}), which causes extra technical difficulties in estimating \eqref{intro-weight} . To overcome it, we exploit the inherent divergence structure  in  nonlinear terms and use elementary integration by parts techniques, see Lemma \ref{lem-weight}.
	

	\subsection{Structure of the Paper}
	The rest of the paper is organized as follows. In section \ref{pre}, we will define the $L^\infty$ mild solution of \eqref{ns}, and describe some basic properties.  In section \ref{carleman}, we will establish a weighted-type estimate, which is the key tool in our proof. In section \ref{pf}, we will give a rigorous proof of the main theorem \ref{main}. \\
	\textit{Notations.} In this paper, we denote $A\lesssim B$ to mean $A\le c B$ for some absolute constant $c$. We denote $A\approx B$ to mean that $A\lesssim B$ and $B\lesssim A$.
	
	\section{Preliminary}\label{pre}
	In this section, we will give a brief definition of $L^\infty$ mild solutions of \eqref{ns}.
	\begin{definition}$u(t)$ is defined to be an $L^\infty$ mild solution of the Cauchy problem (\ref{ns}), if $u\in L^{\infty}_{t,x}$ satisfies \eqref{ns} in the distributional sense and the following integral equation
		\begin{equation}\label{mild}
				u(t) = e^{(t-s)\Delta} u(s) +  \int_{s}^{t} e^{(t-\tau)\Delta} \mathbb{P}\big[\nabla\cdot(u\otimes u)(\cdot, \tau) + f(\cdot, \tau)\big]d\tau,\quad 0 \leq s \leq  t \leq T.
		\end{equation}
	\end{definition}
	\begin{rem}
		Here $e^{t\Delta}$ is the standard heat semigroup, and $\mathbb{P}$ denotes the Leray projection on $\R^{3}$. Cleary, $\P$ is a Calderon-Zugmund type singular integral operator and can be formally written by
		\begin{equation*}
			\mathbb{P}=I+\nabla(-\Delta)^{-1}(\nabla \cdot).
		\end{equation*}
	\end{rem}
	
	\begin{remark} From the definition of mild solution (\ref{mild}), the pressure term can be recovered by
		\begin{equation*}
			\nabla p = \nabla(-\Delta)^{-1}\p_{i}\p_{j}u^{i}u^{j} - \nabla(-\Delta)^{-1}(\nabla \cdot f),
		\end{equation*}
		therefore we are able to estimate the pressure in terms of the velocity filed.
		
	\end{remark}	
	\begin{rem}	
		It is clear that (for instance, see \cite{2009SS}) \eqref{mild} can be written in a convolution form. Explicitly,
		\begin{align}\label{kernel}
			\int_{0}^{t} e^{(t-s)\Delta}\mathbb{P}\big[(u\cdot \nabla)u(\cdot,s)\big]ds&=\int_{0}^{t}\int_{\R^{3}}\Big(\nabla O(x-y,t-s)::u\otimes u(y,s)\Big)dyds,\\
			\int_{0}^{t}e^{(t-s)\Delta}\mathbb{P}(f)(\cdot,s)ds&=\int_{0}^{t}\int_{\R^{3}}O(x-y,t-s)\cdot f(y,s)dyds,
		\end{align}
		where the kernel satisfies following estimates:
		\begin{equation}\label{est}
			|\nabla^{k}O(x,t)|\lesssim (|x|+\sqrt{t})^{-3-|k|},
		\end{equation}
		which implies \eqref{mild} is well defined for $u\in L^{\infty}_{t,x}.$ 
	\end{rem}

	\section{Weighted $L^{2}$ Estimates}\label{carleman}
	In this section, we will prove the following weighted inequality.
	\begin{theorem}\label{weight}
		Assume $u(x,t)\in C_{c}^{\infty}\big(\R^{n}\times (T_{-},T^{+})\big)$, where $[T_{-},T^{+}]\subset (0,\infty)$, $T^{+}$ is sufficiently small. Then for any large number $a>0$ and any $k\ge 0$, the following inequality holds:
		\begin{align}
			&\int_{\bR^{n}\times(T_{-},T^{+})}h(t)^{-(2a+1)}(1+|x|^{2})^{-k}\Big((a+1)u^{2}+|\nabla u|^{2}\Big)dxdt\nonumber\\
			&\quad \quad\lesssim \int_{\bR^{n}\times(T_{-},T^{+})}h(t)^{-2a}(1+|x|^{2})^{-k}|\partial_{t}u+\Delta u|^{2}dxdt,\label{weight-estimate}
		\end{align}
		where $h(t)=te^{-t}$.
	\end{theorem}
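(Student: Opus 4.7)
The plan is to reduce Theorem \ref{weight} to a Carleman estimate via the substitution $v := h^{-a}u$. Since $h$ depends only on time, $h^{-a}(\partial_t + \Delta)(h^a v) = \partial_t v + \Delta v + \lambda(t)v$ with $\lambda(t) := a h'/h = a(1-t)/t$. Writing $\mu(x) := (1+|x|^2)^{-k}$ and $Pv := \partial_t v + \Delta v + \lambda v$, the theorem becomes
\[
\int h^{-1}\mu\bigl((a+1)v^2 + |\nabla v|^2\bigr)\,dxdt \;\lesssim\; \int \mu\,|Pv|^2\,dxdt \qquad\text{for } v \in C^\infty_c(\bR^n\times(T_-,T^+)).
\]
The main obstacle to anticipate is that a naive expansion $|Pv|^2 = |Lv|^2 + 2\lambda v\,Lv + \lambda^2 v^2$ with $Lv := \partial_t v + \Delta v$ generates, after one space IBP in the weighted $L^2(\mu\,dxdt)$ inner product, the cross term $2\lambda\int \mu v\Delta v = -2\int \lambda\mu|\nabla v|^2 + \int \lambda\,\Delta\mu\,v^2$, whose first part is negative and of size $\lambda\sim a/t$ — too large to be absorbed by the available Carleman gain $\lambda^2\sim a^2/t^2$ on $v^2$.

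The decisive trick is to group $\Delta v+\lambda v$ as one block and write $Pv = \partial_t v + (\Delta v + \lambda v)$. A time integration by parts on $2\int\mu\lambda v\,\partial_t v$ (using $\partial_t\mu = 0$) yields the positive Carleman gain $\int\mu(-\partial_t\lambda)v^2 = \int\mu\,(a/t^2)v^2$; a space-then-time IBP on $2\int\mu\partial_t v\,\Delta v$ (again exploiting $\partial_t\mu=0$ to kill $\int\mu\partial_t|\nabla v|^2$) leaves the single error $-2\int\partial_t v\,\nabla\mu\cdot\nabla v$. This gives the clean identity
\[
\int\mu|Pv|^2 = \int\mu(\partial_t v)^2 + \int\mu(\Delta v + \lambda v)^2 + \int\mu\,\frac{a}{t^2}\,v^2 - 2\int\partial_t v\,\nabla\mu\cdot\nabla v,
\]
with three non-negative terms on the good side. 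The error is controlled by $|\nabla\mu|^2\lesssim k^2\mu^2$ and Cauchy-Schwarz as $\tfrac12\int\mu(\partial_t v)^2 + 2k^2\int\mu|\nabla v|^2$; the residual $2k^2\int\mu|\nabla v|^2$ is then handled by the auxiliary identity $\int\mu|\nabla v|^2 = \int\lambda\mu v^2 + \tfrac12\int\Delta\mu\,v^2 - \int\mu v(\Delta v+\lambda v)$ (one space IBP, combined with $|\Delta\mu|\lesssim\mu$), together with $\int\lambda\mu v^2 \leq T^+\int(a/t^2)\mu v^2$, $\int\mu v^2\leq (T^+)^2 a^{-1}\int(a/t^2)\mu v^2$, and Cauchy-Schwarz against $\int\mu(\Delta v+\lambda v)^2$ with a small parameter; choosing $T^+$ small and $a$ large absorbs everything into the three favorable terms and yields $\int\mu|Pv|^2 \gtrsim \int(a/t^2)\mu v^2 + \int\mu(\Delta v + \lambda v)^2$.

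Since $a/t^2 \gtrsim (a+1)/h$ for small $t$, the first term directly gives $(a+1)\int h^{-1}\mu v^2 \lesssim \int\mu|Pv|^2$. For the gradient part of the LHS, multiply the auxiliary identity by $h^{-1}$ and estimate each term: $\int h^{-1}\lambda\mu v^2\lesssim \int(a/t^2)\mu v^2 \lesssim \int\mu|Pv|^2$ (from $h^{-1}\lambda\approx a/t^2$); $\tfrac12|\int h^{-1}\Delta\mu\,v^2|\lesssim a^{-1}\int\mu|Pv|^2$; and $|\int h^{-1}\mu v(\Delta v+\lambda v)|\leq (\int h^{-2}\mu v^2)^{1/2}(\int\mu(\Delta v+\lambda v)^2)^{1/2}\lesssim a^{-1/2}\int\mu|Pv|^2$, using Cauchy-Schwarz and $\int h^{-2}\mu v^2\lesssim a^{-1}\int\mu|Pv|^2$. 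This yields $\int h^{-1}\mu|\nabla v|^2\lesssim \int\mu|Pv|^2$, and un-conjugating $u = h^a v$ recovers Theorem \ref{weight}.
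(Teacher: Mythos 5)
Your proof is correct, and it takes a genuinely different route from the paper's, although both are conjugated (Carleman-type) energy arguments. The paper conjugates $\partial_t+\Delta$ by the \emph{full} space--time weight $h^{a+1}(1+|x|^2)^k$, multiplies by $t$ to obtain an operator $A$ acting on plain (unweighted) $L^2$, splits $A=J+K$ into symmetric and skew-symmetric parts, and invokes the abstract inequality $\|A v\|^2\ge\langle[J,K]v,v\rangle$ of Lemma \ref{commutator}; the price is that the commutator $[J,K]$ must be computed explicitly, and its six resulting terms $I_1,\dots,I_6$ (involving the lengthy potentials $V_1$, $V_2$) are each estimated separately. You instead conjugate \emph{only} by the temporal weight $h^a$ and keep the spatial weight $\mu=(1+|x|^2)^{-k}$ in the measure; since $\partial_t\mu=0$ the time integrations by parts are clean, and the cross term $2\int\mu\,\partial_t v\,(\Delta v+\lambda v)$ collapses to exactly one Carleman gain $\int\mu\,(a/t^2)\,v^2$ (from $-\partial_t\lambda=a/t^2>0$, the same source as the paper's $I_1$) plus a single error $-2\int\partial_t v\,\nabla\mu\cdot\nabla v$. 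That error, together with the $|\nabla v|^2$ terms, is then handled by Cauchy--Schwarz and the auxiliary identity $\int\mu|\nabla v|^2=\int\lambda\mu v^2+\tfrac12\int\Delta\mu\,v^2-\int\mu v(\Delta v+\lambda v)$, with the same ``$T^+$ small, $a$ large'' absorption the paper uses for $I_4$--$I_6$. Your version trades the paper's abstract commutator lemma and explicit $[J,K]$ computation for a couple of elementary integrations by parts plus a short final bookkeeping step to extract $\int h^{-1}\mu|\nabla v|^2$; both proofs rest on the same positivity mechanism built into $h(t)=te^{-t}$.
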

	\begin{remark}
		The above inequality is a type of (time-space) weighted $L^{2}$ estimate. The corresponding  weight in Theorem \ref{weight} satisfies the following facts:
		
		\quad (i)\ The temporal weight $h(t)$ inspired from \cite{2004Backward} is sufficiently large when time is small enough.
		
		\quad (ii)\ For $k>\frac{3}{2}$, the spatial weight is integrable on the whole space.\\
		These two facts are crucial in our proof. 
	\end{remark}
	To prove Theorem \ref{weight}, we need the following lemma.
	\begin{lemma}\label{commutator}
		Assume $A$ is a densely defined operator on a real Hilbert space. Then for each $x\in\mathcal{D}(A^{2})$,the following holds,
		\begin{equation}
			\|Ax\|^{2}\geqslant \langle [J,K]x,x\rangle,
		\end{equation}
		where $J=\frac{A+A^{t}}{2}$ is the symmetric part of $A$, and $K=\frac{A-A^{t}}{2}$ is the skew symmetric part of $A$, $[J,K]=JK-KJ$.
	\end{lemma}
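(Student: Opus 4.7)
The plan is to decompose $A$ into its symmetric and skew-symmetric parts, $A = J + K$, and simply expand $\|Ax\|^{2}$ using the inner product. Since $x\in \mathcal{D}(A^{2})$ is assumed and both $J$ and $K$ are built from $A$ and its transpose, all the vectors $Jx$, $Kx$, $JKx$, $KJx$ make sense and the standard inner-product manipulations are legitimate.

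The key computation will be
\begin{equation*}
\|Ax\|^{2} = \langle (J+K)x,(J+K)x\rangle = \|Jx\|^{2} + \|Kx\|^{2} + \langle Jx,Kx\rangle + \langle Kx,Jx\rangle.
\end{equation*}
Then I will exploit $J^{t}=J$ and $K^{t}=-K$ to shift operators across the inner product, giving $\langle Jx,Kx\rangle = \langle x, JK x\rangle$ and $\langle Kx,Jx\rangle = \langle K^{t} J x, x\rangle = -\langle x, KJ x\rangle$. The sum of these two cross terms is exactly $\langle (JK - KJ)x, x\rangle = \langle [J,K]x,x\rangle$. Putting everything together,
\begin{equation*}
\|Ax\|^{2} = \|Jx\|^{2} + \|Kx\|^{2} + \langle [J,K]x, x\rangle \;\geq\; \langle [J,K]x, x\rangle,
\end{equation*}
since the first two terms are non-negative. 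This immediately yields the claimed inequality.

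There is no real obstacle in this lemma; it is a one-line identity plus a nonnegativity observation. The only point that warrants attention is to justify the adjoint manipulations for a possibly unbounded $A$, which is precisely the purpose of the hypothesis $x \in \mathcal{D}(A^{2})$: it guarantees that $JKx$ and $KJx$ are well-defined elements of the Hilbert space so that both cross-term computations are rigorous. In the intended application to the proof of Theorem \ref{weight}, the operator $A$ will arise from a conjugation of $\partial_{t}+\Delta$ by the weight $h(t)^{-a}(1+|x|^{2})^{-k/2}$ acting on functions in $C_{c}^{\infty}$, so the domain condition is trivially satisfied and the commutator $[J,K]$ will produce the lower-order positive terms controlling $(a+1)u^{2} + |\nabla u|^{2}$.
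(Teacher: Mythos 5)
Your proposal is correct and follows essentially the same route as the paper: decompose $A=J+K$, expand $\|Ax\|^{2}$, drop the nonnegative $\|Jx\|^{2}+\|Kx\|^{2}$, and identify the cross terms with $\langle [J,K]x,x\rangle$ via $J^{t}=J$, $K^{t}=-K$. You simply spell out the adjoint-shifting step a bit more explicitly than the paper does.
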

	\begin{proof}By direct computation,
		\begin{align*}
			\|Ax\|^{2}&=\|(J+K)x\|^{2}=\langle (J+K)x,(J+K)x\rangle\\
			&=\|Jx\|^{2}+\|Kx\|^{2}+\langle Jx,Kx\rangle+\langle Kx,Jx\rangle\\
			&\geqslant \langle Jx,Kx\rangle+\langle Kx,Jx\rangle\\
			&=\langle [J, K]x,x\rangle.
		\end{align*}
	\end{proof}
	Now we are ready to prove Theorem \ref{weight}.
	\begin{proof}[Proof of Theorem \ref{weight}]
		We consider 	
		\begin{align*}
			L:&=h(t)^{-(a+1)}(1+|x|^{2})^{-k}(\partial_{t}+\Delta)h^{a+1}(t)(1+|x|^{2})^{k}\\
			&=\partial_{t}+\Delta+(a+1)\frac{h'(t)}{h(t)}+\frac{4kx}{1+|x|^{2}}\cdot\nabla+\frac{2kn}{1+|x|^{2}}+\frac{4k(k-1)|x|^{2}}{(1+|x|^{2})^{2}}.
		\end{align*}
		Let $A=tL$. Then we can decompose $A$ as:
		\begin{align*}
			A=&t\partial_{t}+t\Delta+(a+1)(1-t)+\frac{4ktx}{1+|x|^{2}}\cdot\nabla+\frac{2knt}{1+|x|^{2}}+\frac{4k(k-1)t|x|^{2}}{(1+|x|^{2})^{2}}\\
			=&\bigg(t\Delta+\frac{4k^{2}t|x|^{2}}{(1+|x|^{2})^{2}}+(a+1)(1-t)-\frac{1}{2}\bigg)\\
			&+\bigg(t\partial_{t}+\frac{4ktx}{1+|x|^{2}}\cdot\nabla+2kt\Big(\frac{n}{1+|x|^{2}}-\frac{2|x|^{2}}{(1+|x|^{2})^{2}}\Big)+\frac{1}{2}\bigg)\\
			=&:J+K,
		\end{align*}
		where $J$ is symmetric and $K$ is skew-symmetric.
		Furthermore, the commutator of $J, K$ can be computed as follows:
		\begin{align*}
			[J,K]=&(a+1)t-t\Delta-\frac{4k^{2}t|x|^{2}}{(1+|x|^{2})^{2}}+\frac{8kt^{2}}{1+|x|^{2}}\Delta\\
			&-\frac{16kt^{2}}{(1+|x|^{2})^{2}}\Big[(x\otimes x)::\nabla^{2}\Big]+t^{2}V_{1}(x)x\cdot\nabla+t^{2}V_{2}(x),
		\end{align*}
		where
		\begin{align*}
			&V_{1}(x)=\frac{64k|x|^{2}}{(1+|x|^{2})^{3}}-\frac{16kn+32k}{(1+|x|^{2})^{2}},\\
			&V_{2}(x)=-\frac{(4n+8)kn}{(1+|x|^{2})^{2}}+\frac{(32kn+64k-32k^{3})|x|^{2}}{(1+|x|^{2})^{3}}+\frac{(64k^{3}-96k)|x|^{4}}{(1+|x|^{2})^{4}}.
		\end{align*}
		Let $v(x,t)=h(t)^{-(a+1)}(1+|x|^{2})^{-k}u(x,t)$, and applying lemma \ref{commutator}, we obtain that
		\begin{align*}
			\int_{\bR^{n}\times(T_{-},T^{+})}h(t)^{-2a}(1+|x|^{2})^{-2k}|\partial_{t} u+\Delta u|^{2}dxdt&\approx\|Av\|_{L^{2}}^{2}\geqslant \langle [J,K]v,v\rangle\\
			&=: \sum_{j=1}^{6} I_{j},
		\end{align*}
		where
		\begin{align*}
			&I_{1}=\int_{\bR^{n}\times(T_{-},T^{+})}(a+1)tv^2dxdt,\\
			&I_{2}=\int_{\bR^{n}\times(T_{-},T^{+})}-tv\Delta vdxdt,\\
			&I_{3}=\int_{\bR^{n}\times(T_{-},T^{+})}-\frac{4k^{2}t|x|^{2}}{(1+|x|^{2})^{2}}v^2dxdt,\\
		\end{align*}

		\begin{align*}
			&I_{4}=\int_{\bR^{n}\times(T_{-},T^{+})}\frac{8kt^{2}}{1+|x|^{2}}v\bigg(\Delta v-\Big[\frac{2(x\otimes x)}{1+|x|^{2}}::\nabla^{2}\Big]v\bigg)dxdt,\\
			&I_{5}=\int_{\bR^{n}\times(T_{-},T^{+})}t^{2}V_{1}(x)v(x\cdot\nabla) vdxdt,\\
			&I_{6}=\int_{\bR^{n}\times(T_{-},T^{+})}t^{2}V_{2}(x)v^2dxdt.
		\end{align*}
		It is easy to check that
		\begin{align*}
			&I_{1}\gtrsim (a+1)\int_{\bR^{n}\times(T_{-},T^{+})}h(t)^{-(2a+1)}(1+|x|^{2})^{-2k}|u|^{2}dxdt;\\
			&I_{3}\gtrsim -\int_{\bR^{n}\times(T_{-},T^{+})}h(t)^{-(2a+1)}(1+|x|^{2})^{-2k}|u|^{2}dxdt.
		\end{align*}
		For $I_{2}$, integrating by parts, we have
		\begin{align*}
			I_{2}&\gtrsim \int_{\bR^{n}\times(T_{-},T^{+})}h(t)^{-(2a+1)}\bigg|\nabla\Big((1+|x|^{2})^{-k}u\Big)\bigg|^2dxdt\\
			&\gtrsim \int_{\bR^{n}\times(T_{-},T^{+})}h(t)^{-(2a+1)}(1+|x|^{2})^{-2k}|\nabla u|^{2}dxdt\\
			&\quad-2\int_{\bR^{n}\times(T_{-},T^{+})}h(t)^{-(2a+1)}(1+|x|^{2})^{-2k}|u|^{2}dxdt.
		\end{align*}
		For $I_{4}$, integrating by parts again, using the fact that $T^{+}$ is small, we have
		\begin{align*}
			|I_{4}|&\lesssim \int_{\bR^{n}\times(T_{-},T^{+})}h(t)^{-2a}(1+|x|^{2})^{-1}\bigg|\nabla\Big((1+|x|^{2})^{-k}u\Big)\bigg|^2\\
			&\lesssim \int_{\bR^{n}\times(T_{-},T^{+})}h(t)^{-2a}(1+|x|^{2})^{-2k}|\nabla u|^{2}+\int_{\bR^{n}\times(T_{-},T^{+})}h(t)^{-2a}(1+|x|^{2})^{-2k}| u|^{2}dxdt,
		\end{align*}
		Similarly, integrating by parts, and applying Cauchy-Schwartz inequality, we can also obtain
		\begin{equation*}
			|I_{5}+I_{6}|\lesssim \int_{\bR^{n}\times(T_{-},T^{+})}h(t)^{-2a}(1+|x|^{2})^{-2k}|\nabla u|^{2}dxdt+\int_{\bR^{n}\times(T_{-},T^{+})}h(t)^{-2a}(1+|x|^{2})^{-2k}| u|^{2}dxdt.
		\end{equation*}
		Combining the above all estimates, using the fact that $a$ is large enough and $T^+$ is small enough, we have
		\begin{align*}
			&\int_{\bR^{n}\times(T_{-},T^{+})}h(t)^{-2a}(1+|x|^{2})^{-2k}|\partial_{t}u+\Delta u|^{2}dxdt\\
			&\quad\gtrsim
			(a+1)\int_{\bR^{n}\times(T_{-},T^{+})}h(t)^{-(2a+1)}(1+|x|^{2})^{-2k}|u|^{2}dxdt\\
			&\quad\quad+\int_{\bR^{n}\times(T_{-},T^{+})}h(t)^{-(2a+1)}(1+|x|^{2})^{-2k}|\nabla u|^{2}dxdt.
		\end{align*}
	\end{proof}
	By approximating with compactly support smooth functions, it holds that
	\begin{corollary}\label{general}
		Assume $u\in H_{0}^{1}\big((T_{-},T^{+});W^{2,p}(\R^{3})\big)$, where $2\le p\le \infty$, then \eqref{weight-estimate} still holds for $k>3\Big(\frac{1}{2}-\frac{1}{p}\Big)$.
	\end{corollary}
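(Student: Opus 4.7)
The plan is to obtain Corollary \ref{general} from Theorem \ref{weight} by a density argument, so the heart of the proof is showing (i) every weighted integral in \eqref{weight-estimate} is a priori finite under the stated hypotheses, and (ii) a suitable approximation by $C_c^\infty$ functions preserves each side in the limit. First I would verify (i) via Hölder's inequality with conjugate exponents $p/2$ and $q = p/(p-2)$ (with $q=1$ when $p=2$, and $q=\infty$ in a trivial modification when $p=\infty$): for any $F \in L^p(\R^3)$,
$$\int_{\R^3}(1+|x|^2)^{-k}|F(x)|^2\,dx \;\le\; \bigl\|(1+|x|^2)^{-k}\bigr\|_{L^q(\R^3)} \|F\|_{L^p(\R^3)}^2,$$
and the first factor is finite if and only if $kq > 3/2$, which rearranges to $k > 3(\tfrac12 - \tfrac1p)$. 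Applied to $F = u(\cdot,t)$, $\nabla u(\cdot,t)$, and $(\partial_t u + \Delta u)(\cdot,t)$, together with boundedness of $h(t)^{-\beta}$ on $[T_-,T^+]$, this shows every integral in \eqref{weight-estimate} is controlled by $\|u\|_{H^1_t W^{2,p}_x}^2$.

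Next I would construct the approximating sequence. Let $\rho_\epsilon$ be a standard space-time mollifier, $\chi_R$ a smooth spatial cutoff equal to $1$ on $|x|\le R$ and supported in $|x|\le 2R$, and $\psi_\delta$ a smooth temporal cutoff supported in $[T_-+\delta, T^+-\delta]$. Because $u \in H_0^1$ in time, its extension by zero outside $(T_-, T^+)$ lies in $H^1(\R)$, so $\psi_\delta u \to u$ in $H^1_t W^{2,p}_x$ as $\delta \to 0$. Setting $u_n := \rho_{\epsilon_n} * (\chi_{R_n}\psi_{\delta_n} u)$ with $\epsilon_n,\delta_n \to 0$ and $R_n \to \infty$ gives $u_n \in C_c^\infty(\R^3\times(T_-,T^+))$, with $u_n \to u$, $\nabla u_n \to \nabla u$ in $L^2_t W^{1,p}_x$, and $\partial_t u_n + \Delta u_n \to \partial_t u + \Delta u$ in $L^2_t L^p_x$. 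Applying Theorem \ref{weight} to each $u_n$ and sending $n\to\infty$ (Fatou on the left, dominated convergence on the right, with the Hölder bound above furnishing the integrable envelope) yields \eqref{weight-estimate} for $u$.

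The main obstacle is controlling the cutoff commutators in $(\partial_t+\Delta)(\chi_{R_n}\psi_{\delta_n} u)$, which in addition to the expected $\chi_R\psi_\delta(\partial_t u + \Delta u)$ produces error terms of the form $(\Delta \chi_R)\psi_\delta u$, $2\nabla\chi_R\cdot\psi_\delta\nabla u$, and $\chi_R \psi_\delta' u$. The spatial pieces are supported in the annulus $\{R_n \le |x| \le 2R_n\}$, where the weight $(1+|x|^2)^{-k}$ is of order $R_n^{-2k}$; a repetition of the Hölder estimate above confined to this annulus shows these contributions vanish as $R_n \to \infty$ exactly under $k > 3(\tfrac12-\tfrac1p)$, which is why this is the sharp threshold. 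The temporal piece $\chi_R\psi_\delta' u$ is handled by the $H_0^1$ vanishing of the time trace at $T_\pm$, which forces $\|\psi_\delta' u\|_{L^2_t L^p_x} \to 0$ as $\delta \to 0$. Once these errors are absorbed, both sides of \eqref{weight-estimate} pass to the limit and the inequality is inherited by $u$.
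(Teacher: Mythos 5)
Your proposal is correct and follows essentially the same route as the paper: cut off (the paper uses only the spatial cutoff $\eta_m$), apply Theorem \ref{weight}, check that the commutator terms $(\Delta\eta_m)u$ and $\nabla\eta_m\cdot\nabla u$ vanish in the limit, and pass to the limit using the H\"older bound that makes $k>3(\tfrac12-\tfrac1p)$ the right threshold. Your treatment is in fact somewhat more careful than the paper's, since you also address the temporal cutoff/mollification needed to genuinely land in $C_c^\infty(\R^3\times(T_-,T^+))$, which the paper leaves implicit.
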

	\begin{proof}
		Let $\eta$ be a smooth spatial cut-off function such that $\eta\equiv 1$ for $|x|\le 1$, and $\eta\equiv 0$ for $|x|\ge 2$. Let $\eta_{m}(x)=\eta(\frac{x}{m})$. Replace $u$ by $\eta _{m}u$, and applying Theorem \ref{weight} we obtain
		\begin{align*}
			&\int_{\bR^{n}\times(T_{-},T^{+})}h(t)^{-(2a+1)}(1+|x|^{2})^{-k}\Big((a+1)u^{2}+|\nabla u|^{2}\Big)dxdt\\
			= &\lim_{m\to\infty}\int_{\bR^{n}\times(T_{-},T^{+})}h(t)^{-(2a+1)}(1+|x|^{2})^{-k}\Big((a+1)(\eta_{m}u)^{2}+|\nabla( \eta_{m}u)|^{2}\Big)dxdt\\
			\lesssim& \lim_{m\to\infty}\int_{\bR^{n}\times(T_{-},T^{+})}h(t)^{-2a}(1+|x|^{2})^{-k}|\partial_{t}\eta_{m}u+\Delta (\eta_{m}u)|^{2}dxdt.\\
			\lesssim &\lim_{m\to\infty}\int_{\bR^{n}\times(T_{-},T^{+})}h(t)^{-2a}(1+|x|^{2})^{-k}|\eta_{m}|^{2}|\partial_{t}u+\Delta u|^{2}dxdt\\
			&\quad +\lim_{m\to\infty}\int_{\bR^{n}\times(T_{-},T^{+})}h(t)^{-2a}(1+|x|^{2})^{-k}\big(|\Delta \eta_{m} u|^{2}+|\nabla \eta_{m}\nabla u|^{2}\big)dxdt\\
			\lesssim &\int_{\bR^{n}\times(T_{-},T^{+})}h(t)^{-2a}(1+|x|^{2})^{-k}|\partial_{t}u+\Delta u|^{2}dxdt.
		\end{align*}
	\end{proof}
	\section{Proof of the Main result}\label{pf}
	In this section, we will prove the main theorem of this paper. \begin{proof}[Proof of Theorem 1.1]
		Assume $u_1, u_2$ satisfy the assumption of  Theorem 1.1. Here, as in section \ref{carleman}, we choose $T_{-},T^{+}$ sufficiently small.  Let $u(x,t)=(u_1-u_2)(x,T+T_{-}-t)$, then $u$ satisfies the following  equation:
		\begin{equation}\label{back}
			\begin{cases}
				\partial_{t}u+\Delta u+\mathbb{P}\nabla\cdot(u_1\otimes u + u\otimes u_2)=0,\quad (x,t)\in\R^{3}\times [T_{-},T+T^{-}],\\
				\nabla\cdot u=0,\\
				u(\cdot, T_{-})=0.
			\end{cases}
		\end{equation}
		Then by Section 3 in \cite{2009SS}, $u$ is sufficiently smooth. Choose a temporal  cut-off function $\chi\in C^{\infty}[T_{-},T^{+}]$ such that $\chi(t)= 1$, when $t\in [T_{-},T_{-}+\delta]$, and $\chi(t)\equiv 0$ when $t\in [T_{-}+2\delta, T^{+}]$, where $3\delta<T^{+}-T_{-}$. For $\chi(t)u(x,t)$, applying  Corollary \ref{coro} with $\frac{3}{2}<k<\frac{5}{2}$, we obtain
		\begin{align*}
			&\int_{(T_{-},T^{+})\times \R^{3}}\chi^{2}(t)h(t)^{-(2a+1)}(1+|x|^{2})^{-k}\big((a+1)|u|^{2}+|\nabla u|^{2}\big)dxdt\\
			\lesssim &\int_{(T_{-},T^{+})\times \R^{3}}\chi^{2}(t)h(t)^{-2a}(1+|x|^{2})^{-k}|\partial_{t}u+\Delta u|^{2}dxdt\\
			&\quad +\int_{(T_{-},T^{+})\times \R^{3}}|\chi'(t)|^{2}h(t)^{-2a}(1+|x|^{2})^{-k}|u|^{2}dxdt\\
			\lesssim &\int_{(T_{-},T^{+})\times \R^{3}}\chi^{2}(t)h(t)^{-2a}(1+|x|^{2})^{-k}\Big|\mathbb{P}\nabla\cdot(u_1\otimes u + u\otimes u_2)\Big|^{2}dxdt\\
			&\quad +\int_{(T_{-},T^{+})\times \R^{3}}|\chi'(t)|^{2}h(t)^{-2a}(1+|x|^{2})^{-k}|u|^{2}dxdt\\
			:=&A+B.
		\end{align*}
		First, we estimate $A$. It turns out that $A$ can be absorbed in the left-hand side. Clearly,
		\begin{equation}\label{esti}
			\begin{aligned}
				A\lesssim &\int_{(T_{-},T^{+})\times \R^{3}}\chi^{2}(t)h(t)^{-2a}(1+|x|^{2})^{-k}|u_1\cdot \nabla u+u\cdot \nabla u_2|^{2}dxdt\\
				&+\int_{(T_{-},T^{+})\times\R^{3}}\chi^{2}(t)h(t)^{-2a}(1+|x|^{2})^{-k}|\mathcal{R}\nabla\cdot(u_1\otimes u )|^{2}dxdt\\
				&+\int_{(T_{-},T^{+})\times\R^{3}}\chi^{2}(t)h(t)^{-2a}(1+|x|^{2})^{-k}|\mathcal{R}\nabla\cdot( u\otimes u_2)|^{2}dxdt\\
				:=&A_{1}+A_{2}+A_{3},
			\end{aligned}
		\end{equation}
		where $\mathcal{R}=\nabla(-\Delta)^{-1}\nabla \cdot$ is the zeroth-order singular integral operator. Note that $A_{1}$ can be estimated directly,
		\begin{align*}
			A_{1}\lesssim &\int_{(T_{-},T^{+})\times\R^{3}}\chi^{2}(t)h(t)^{-2a}(1+|x|^{2})^{-k}|u_1|^{2}|\nabla u|^{2}dxdt\\
			&+\int_{(T_{-},T^{+})\times\R^{3}}\chi^{2}(t)h(t)^{-2a}(1+|x|^{2})^{-k}|u|^{2}|\nabla u_2|^{2}dxdt:=A_{11}+A_{12}.
		\end{align*}
		Clearly,
		\begin{equation*}
			A_{11}\lesssim \|u_1\|_{L^{\infty}_{t,x}}^{2}\int_{(T_{-},T^{+})\times\R^{3}}\chi^{2}(t)h(t)^{-2a}(1+|x|^{2})^{-k}|\nabla u|^{2}dxdt.
		\end{equation*}
		To ensure $\nabla u_2$ is not involved in the estimate of $A_{12}$, we have to rewrite the term $|\nabla u_2|^{2}$. Indeed,
		\begin{align*}
			A_{12}&=\int_{(T_{-},T^{+})\times\R^{3}}\chi^{2}(t)h(t)^{-2a}(1+|x|^{2})^{-k}|u|^{2}|\nabla u_2|^{2}dxdt\\
			&=\int_{(T_{-},T^{+})\times\R^{3}}\chi^{2}(t)h(t)^{-2a}(1+|x|^{2})^{-k}|u|^{2}\big(\Delta(|u_2|^{2})-u_2\cdot\Delta u_2\big)dxdt:=A_{13}+A_{14},
		\end{align*}
		Firstly, we estimate the term $A_{14}$. Note that $-\Delta u_2=\nabla\times\nabla \times u_2$ since $\nabla\cdot u_2=0$, integrating by parts, we have
		\begin{align*}
			A_{14}=&\int_{(T_{-},T^{+})\times\R^{3}}\chi^{2}(t)h(t)^{-2a}(1+|x|^{2})^{-k}|u|^{2}u_2\cdot(\nabla\times\nabla\times u_2)dxdt\\
			=&-\int_{(T_{-},T^{+})\times\R^{3}}\chi^{2}(t)h(t)^{-2a}\nabla\times\big((1+|x|^{2})^{-k}|u|^{2}u_2)\cdot\nabla\times u_2dxdt\\
			\lesssim &\int_{(T_{-},T^{+})\times\R^{3}}\chi^{2}(t)h(t)^{-2a}
			(1+|x|^{2})^{-(k+1/2)}|u|^{2}|u_2|\cdot|\nabla\times u_2|dxdt\\
			&+\int_{(T_{-},T^{+})\times\R^{3}}\chi^{2}(t)h(t)^{-2a}(1+|x|^{2})^{-k}|u|\cdot|\nabla u|\cdot|u_2|\cdot|\nabla\times u_2|dxdt\\
			&+\int_{(T_{-},T^{+})\times\R^{3}}\chi^{2}(t)h(t)^{-2a}(1+|x|^{2})^{-k}|u|^{2}|\nabla\times u_2|^{2}dxdt\\
			\lesssim &(\|u_2\|_{L^{\infty}_{t,x}}^{2}+\|\nabla\times u_2\|_{L^{\infty}_{t,x}}^{2})\int_{(T_{-},T^{+})\times\R^{3}}\chi^{2}(t)h(t)^{-2a}(1+|x|^{2})^{-k}(|u|^{2}+|\nabla u|^{2})dxdt.
		\end{align*}
		For $A_{13}$, integrating by parts again we have
		\begin{align*}
			A_{13}=&\int_{(T_{-},T^{+})\times\R^{3}}\chi^{2}(t)h(t)^{-2a}(1+|x|^{2})^{-k}|u|^{2}\big(\Delta(|u_2|^{2})\big)dxdt\\
			\le &\int_{(T_{-},T^{+})\times\R^{3}}\chi^{2}(t)h(t)^{-2a}\Big|\nabla\big((1+|x|^{2})^{-k}|u|^{2}\big)\Big|\cdot\Big|\nabla\big(|u_2|^{2}\big)\Big|dxdt\\
			\lesssim &\int_{(T_{-},T^{+})\times\R^{3}}\chi^{2}(t)h(t)^{-2a}(1+|x|^{2})^{-(k+1)}|u|^{2}|u_2|\cdot|\nabla u_2|dxdt\\
			&+\int_{(T_{-},T^{+})\times\R^{3}}\chi^{2}(t)h(t)^{-2a}(1+|x|^{2})^{-k}|u|\cdot|\nabla u|\cdot|u_2|\cdot |\nabla u_2|dxdt\\
			\lesssim &\epsilon \int_{(T_{-},T^{+})\times\R^{3}}\chi^{2}(t)h(t)^{-2a}(1+|x|^{2})^{-k}|u|^{2}|\nabla u_2|^{2}dxdt\\
			&+C_{\epsilon}\int_{(T_{-},T^{+})\times\R^{3}}\chi^{2}(t)h(t)^{-2a}(1+|x|^{2})^{-k}|u_2|^2(|u|^{2}+|\nabla u|^{2})dxdt\\
			\lesssim &\epsilon A_{12}+C_{\epsilon}  \|u_2\|_{L^{\infty}_{t,x}}^{2}\int_{(T_{-},T^{+})\times\R^{3}}\chi^{2}(t)h(t)^{-2a}(1+|x|^{2})^{-k}(|u|^{2}+|\nabla u|^{2})dxdt,
		\end{align*}
		where $\epsilon$ is sufficiently small and $C_{\epsilon}$ is a constant only depends on $\epsilon$.	Combining above computations, we have
		\begin{align*}
			A_{12}&\lesssim (\|u_2\|_{L^{\infty}_{t,x}}^{2}+\|\nabla\times u_2\|_{L^{\infty}_{t,x}}^{2})\int_{(T_{-},T^{+})\times\R^{3}} \chi^{2}(t)h(t)^{-2a}(1+|x|^{2})^{-k}(|u|^{2}+|\nabla u|^{2}).
		\end{align*}
		To estimate $A_{2}$ and $A_3$, we prove the following lemma:
		\begin{lemma}\label{lem-weight}
			for $0\le k < \frac{5}{2}$, the following inequality holds:
			\begin{equation}
				\int_{\R^{3}}(1+|x|^{2})^{-k}|\mathcal{R}\nabla \cdot f|^{2}dx \lesssim \int_{\R^{3}}(1+|x|^{2})^{-k}\Big(|\nabla f|^{2}+|f|^{2}\Big)dx .
			\end{equation}
		\end{lemma}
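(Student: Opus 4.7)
My approach is to reduce the estimate to a second-order elliptic problem via the scalar ``pressure''
$$p := (-\Delta)^{-1}\partial_i\partial_j f^{ij},$$
so that $\mathcal{R}\nabla\cdot f = \nabla p$ and $p$ solves $-\Delta p=\partial_i\partial_j f^{ij}$. The claim is then equivalent to $\int_{\R^3} w\,|\nabla p|^2\,dx \lesssim \int_{\R^3} w\,(|\nabla f|^2+|f|^2)\,dx$ with $w(x):=(1+|x|^2)^{-k}$.

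First I would multiply the equation $-\Delta p=\partial_i\partial_j f^{ij}$ by $pw$, integrate over $\R^3$, and perform exactly one integration by parts on each side so that a single derivative remains on $f$; this is the divergence-structure trick advertised in the introduction. The resulting identity reads
$$\int w|\nabla p|^2\,dx = -\int p\,\nabla w\cdot\nabla p\,dx - \int w\,\partial_j p\,\partial_i f^{ij}\,dx - \int p\,\partial_j w\,\partial_i f^{ij}\,dx.$$
A direct calculation gives the pointwise weight bounds $|\nabla w|^2/w \lesssim w/(1+|x|^2)$ and $|\nabla^2 w|\lesssim w/(1+|x|^2)$. Applying Cauchy--Schwarz with a small parameter to each term on the right and absorbing $\varepsilon\int w|\nabla p|^2$ back into the left-hand side reduces the lemma to the key estimate
$$\int_{\R^3} \frac{w}{1+|x|^2}\,|p|^2\,dx \lesssim \int_{\R^3} w\,(|f|^2+|\nabla f|^2)\,dx. \qquad (\ast)$$

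To prove $(\ast)$ I would split according to the size of $k$. In the easy range $0\le k<3/2$, $w$ lies in the Muckenhoupt class $\mathcal{A}_2$; applied to the Calder\'on--Zygmund representation $p=(-\Delta)^{-1}\partial_i\partial_j f^{ij}$ this yields the even stronger bound $\int w|p|^2\,dx\lesssim \int w|f|^2\,dx$. In the delicate range $3/2\le k<5/2$, $w$ falls outside every $\mathcal{A}_p$-class, so I would proceed by hand: write $p$ by convolution and decompose the $y$-integral into a near region $|y-x|\le|x|/2$ and a far region. On the near region, $w(y)\sim w(x)$ and a further integration by parts transferring one derivative from the kernel onto $f^{ij}$ reduces matters to a localized unweighted $L^2$-bound of the singular integral, which produces the $|\nabla f|^2$ contribution; on the far region, the kernel is pointwise of size $|x-y|^{-3}$ and Cauchy--Schwarz against the (now integrable) weight $w$ gives a pointwise estimate roughly of the shape $|p(x)|^2\lesssim (1+|x|^2)^{k-3/2}\int w|f|^2\,dy$, where the upper cutoff $k<5/2$ is exactly what makes the resulting radial integral converge.

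The main obstacle is $(\ast)$ for $3/2\le k<5/2$: no classical weighted Calder\'on--Zygmund theorem applies there, and one must exploit the divergence form $\partial_i\partial_j f^{ij}$ together with the near/far splitting just described to trade one derivative and close the estimate. This is precisely the technical point emphasized in the introduction, which explains why polynomial-type weights are forced upon us in place of the classical Carleman exponential weights.
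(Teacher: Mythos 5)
Your reduction to the elliptic problem for $p=(-\Delta)^{-1}\partial_i\partial_j f^{ij}$ is reasonable in spirit, but the key estimate $(\ast)$ to which you reduce the lemma is \emph{false} in the range $\tfrac32<k<\tfrac52$, and this is a genuine gap rather than a technicality. The point is that $p$ is a zeroth-order Calder\'on--Zygmund image of $f$ and carries no decay: take $f^{11}(y)=\phi\big((y-2Re_1)/R\big)$ with $\phi$ a fixed unit bump. Then for $|x|\le 1$ the kernel $\Omega(z/|z|)|z|^{-3}$ is of one sign and of size $R^{-3}$ on the support of $f$, so $|p(x)|\gtrsim 1$ uniformly in $R$, whence the left side of $(\ast)$ is bounded below independently of $R$, while $\int w(|f|^2+|\nabla f|^2)\lesssim R^{3-2k}\to 0$. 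Consistently with this, your far-region Cauchy--Schwarz computation does not close: $\int_{|x-y|>|x|/2}|x-y|^{-6}w(y)^{-1}\,dy$ diverges on the sub-region $|y|\gg|x|$ precisely when $k\ge\tfrac32$, so the claimed pointwise bound $|p(x)|^2\lesssim(1+|x|^2)^{k-3/2}\int w|f|^2$ is not obtainable as described. To salvage the scheme you would have to use that $p$ is only determined modulo constants: test the equation against $(p-p(0))w$, and in the far region replace $K(x-y)$ by $K(x-y)-K(-y)$ (which decays like $|x|\,|y|^{-4}$ thanks to the mean-zero kernel), proving the Hardy-type bound for $p-p(0)$ instead of $p$. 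None of this is indicated in your write-up, so as it stands the argument for $\tfrac32\le k<\tfrac52$ --- which is exactly the range the application needs --- does not go through.

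For comparison, the paper never introduces the non-decaying potential $p$ at all. It writes $\mathcal{R}\nabla\cdot f$ as a convolution, splits the kernel with a cutoff into a compactly supported singular piece (kept acting on $\nabla f$, handled by unweighted $L^2$ boundedness after commuting the weight) and a far piece whose derivative is moved onto the kernel, producing an \emph{integrable} kernel of size $(1+|y|^2)^{-2}$ acting on $f$; the constraint $k<\tfrac52$ then enters only through an elementary Young/H\"older computation. That route sidesteps the normalization issue entirely, which is why I would encourage you either to adopt it or to carry out the $p-p(0)$ correction carefully.
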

		\begin{remark}
			Note that for $k\ge \frac{3}{2}$, $(1+|x|^{2})^{-k}$ does not belong to any $\mathcal{A}_p$ class, however, the divergence structure enable us to improve the inequality to $k<\frac{5}{2}$, which enables $f\in L^\infty.$
		\end{remark}
		By the definition of $\mathcal{R}$ and divergence, for $1\le i,j,l \le 3,$ we define $$\mathcal{R}_{ijl}=\mathcal{F}^{-1}\frac{\xi_i \xi_j}{|\xi|^2}\mathcal{F}\partial_{x_l}$$
		via Fourier transform. Then with a slightly abuse of notations, we shall only estimate	
		\begin{equation*}
			I_{ijl}=\int_{\R^{3}}(1+|x|^{2})^{-k}|\mathcal{R}_{ijl}f|^{2}dx.
		\end{equation*}
		By the representation formula of homogeneous singular integral operator, for each $1\le i,j\le 3$, there exists a smooth function $\Omega_{ij}$ on $\mathbb{S}^2$ with mean zero such that $$\mathcal{R}_{ijl}f=\frac{\om(y/|y|)}{|y|^{3}} * \partial_{x_l}f,$$ where the convolution is defined in a limit sense. 
		Choosing $\varphi\in C_{c}^{\infty}(B_{2})$ and $\varphi\equiv 1$ in $B_{1}$, and integral by parts, we can decompose $\mathcal{R}_{ijl}$ into
		\begin{align*}
			\mathcal{R}_{ijl}f&=\bigg[\frac{\om(y/|y|)}{|y|^{3}}\varphi(y)\bigg] * \partial_{x_k}f - \partial_{y_l}\bigg[\frac{\om(y/|y|)}{|y|^{3}}(1-\varphi(y))\bigg] * f\\
			&:=\mathcal{R}_{ijl1}f+\mathcal{R}_{ijl2}f.
		\end{align*}
		The virtue of this decomposition is that $\mathcal{R}_{ijl1}$ is a localized operator while the kernal of $\mathcal{R}_{ijl2}$ is integrable. Hence we compute
		\begin{align*}
			I_{ijl}=&\int_{\R^{3}}(1+|x|^{2})^{-k}\big|\mathcal{R}_{ijl1}f+\mathcal{R}_{ijl2}f\big|^{2}dx\\
			\lesssim&\int_{\R^{3}}(1+|x|^{2})^{-k}\bigg|\mathcal{R}_{ijl1}f\bigg|^{2}dx\\
			&+\int_{\R^{3}}(1+|x|^{2})^{-k}\bigg|\mathcal{R}_{ijl2}f\bigg|^{2}dx\\
			:=&I_{ijl1}+I_{ijl2}.
		\end{align*}
		We will estimate $I_{ijl1},I_{ijl2}$ separately.\\
		\textit{Estimates of $I_{ijl1}$.} First, note that
		\begin{align*}
			(1+|x|^{2})^{-k/2}&=(1+|x-y|^{2})^{-k/2}-\int_{0}^{1}\frac{d}{dt}(1+|x-ty|^{2})^{-k/2}dt\\
			&=(1+|x-y|^{2})^{-k/2}-\int_{0}^{1}(1+|x-ty|^{2})^{-(k+2)/2}(x-ty)\cdot y dt.
		\end{align*}
		We decompose
		\begin{align*}
			I_{ijl1}\lesssim&\int_{\R^{3}}\bigg|\int_{\R^{3}}(1+|x-y|^{2})^{-k/2}\frac{\om(y/|y|)}{|y|^{3}}\varphi(y)\nabla f(x-y)dy\bigg|^{2}dx\\
			&+\int_{\R^{3}}\bigg|\int_{\R^{3}}\int_{0}^{1}(1+|x-ty|^{2})^{-(k+1)/2}dt\frac{\om(y/|y|)}{|y|^{2}}\varphi(y)\nabla f(x-y)dy\bigg|^{2}dx\\
			:=&I_{11}+I_{12}.
		\end{align*}
		By the $L^{2}$ boundness of singular integral integral,
		\begin{equation*}
			I_{11}\lesssim \int_{\R^{3}} (1+|x|^{2})^{-k}|\nabla f|^{2}dx.
		\end{equation*}
		For $I_{12}$, since for $|y|\le 2,$ we have
		\begin{equation*}
			1+|x-y|\lesssim 1+|x-ty|,
		\end{equation*}
		by Minkowski's inequality, we have
		\begin{align*}
			I_{12}&\lesssim \int_{\R^{3}}\int_{\R^{3}}\bigg|(1+|x-y|^{2})^{-(k+1)/2}\partial_{x_k}f(x-y)\bigg|^{2}\frac{\varphi(y)}{|y|^{2}}dydx\\
			&\lesssim \int_{\R^{3}}(1+|x|^{2})^{-k}|\nabla f|^{2}dx.
		\end{align*}
		\textit{Estimates of $I_{ijl2}$.}
		Note that $$\partial_{y_k}\bigg[\frac{\om(y/|y|)}{|y|^{3}}(1-\varphi(y))\bigg]\lesssim (1+|y|^{2})^{-2},$$
		we have
		\begin{align*}
			I_{ijl2}\lesssim&\int_{\R^{3}}(1+|x|^{2})^{-k}\bigg|\int_{\R^{3}}f(x-y)(1+|y|^{2})^{-2}dy\bigg|^{2}dx\\
			=&\int_{\R^{3}}\bigg|\int_{\R^{3}}f(x-y)(1+|x|^{2})^{-\frac{k}{2}}(1+|y|^{2})^{-2}dy\bigg|^{2}dx\\
			\lesssim&\int_{\R^{3}}\bigg|\int_{\R^{3}}f(x-y)(1+|x-y|^{2})^{-\frac{k}{2}}\bigg((1+|y|^{2})^{-2}+(1+|y|^{2})^{-(2-\frac{k}{2})}(1+|x|^{2})^{-\frac{k}{2}}\bigg)dy\bigg|^{2}dx\\
			\lesssim&\int_{\R^{3}}\bigg|\int_{\R^{3}}f(x-y)(1+|x-y|^{2})^{-\frac{k}{2}}(1+|y|^{2})^{-2}dy\bigg|^{2}dx\\
			&+\int_{\R^{3}}\bigg|\int_{\R^{3}}f(x-y)(1+|x-y|^{2})^{-\frac{k}{2}}(1+|y|^{2})^{-(2-\frac{k}{2})}(1+|x|^{2})^{-\frac{k}{2}}dy\bigg|^{2}dx\\
			:=&I_{21}+I_{22},
		\end{align*}
		where in the second inequality we  used the fact that $$(1+|x|^{2})^{-\frac{k}{2}}(1+|y|^{2})^{-2}\lesssim (1+|x-y|^{2})^{-\frac{k}{2}}\bigg((1+|y|^{2})^{-2}+(1+|y|^{2})^{-(2-\frac{k}{2})}(1+|x|^{2})^{-\frac{k}{2}}\bigg).$$
		For $I_{21}$, using Young's inequality we have 
		\begin{equation*}
			I_{21} \lesssim 	\int_{\R^{3}} (1+|x|^{2})^{-k}f^{2}(x)dx.
		\end{equation*}
		For $I_{22}$, since $k < \frac{5}{2}$, using Cauchy-Schwartz inequality  we have
		\begin{align*}
			I_{22}&= \int_{\R^{3}}(1+|x|^{2})^{-k}\bigg(\int_{\R^{3}}f(x-y)(1+|x-y|^{2})^{-\frac{k}{2}}(1+|y|^{2})^{-(2-\frac{k}{2})}dy\bigg)^{2}dx\\
			&\lesssim \left(\int_{\R^{3}}(1+|x|^{2})^{-\frac{5}{2}}dx\right)^{\frac{2k}{5}} \bigg\|\int_{\R^{3}}f(x-y)(1+|x-y|^{2})^{-\frac{k}{2}}(1+|y|^{2})^{-(2-\frac{k}{2})}dy\bigg\|_{L_x^{\frac{10}{5-2k}}(\R^{3})}^2.\\
		\end{align*}
		Then using Young's inequality, since $k<\frac{5}{2}$ implies $\frac{5(4-k)}{5-k}>3$, we have
		\begin{align*}
			I_{22}&\lesssim \bigg\|\int_{\R^{3}}f(x-y)(1+|x-y|^{2})^{-\frac{k}{2}}(1+|y|^{2})^{-(2-\frac{k}{2})}dy\bigg\|_{L_x^{\frac{10}{5-2k}}(\R^{3})}^2\\ &\lesssim \int_{\R^{3}} (1+|x|^{2})^{-k}f^{2}(x)dx \bigg\|(1+|y|^{2})^{-(2-\frac{k}{2})}\bigg\|_{L_x^{\frac{5}{5-k}}(\R^{3})}^2\\
			&\lesssim \int_{\R^{3}} (1+|x|^{2})^{-k}f^{2}(x)dx.
		\end{align*}
		Combining above estimates the lemma \ref{lem-weight} is proved. 
		
		The estimates of $A_2$ and $A_3$ now follows from $f=u_1\otimes u$ and $f=u\otimes u_2$. We have
		\begin{equation*}
			A_{2}\lesssim \int_{(T_{-},T^{+})\times \R^{3}}\chi^{2}(t)h(t)^{-2a}(1+|x|^{2})^{-k}(|u_1|^{2}|u|^{2}+|u_1|^{2}|\nabla u|^{2}+|\nabla u_1|^{2} |u|^{2})dxdt.
		\end{equation*}
		Similarly,
		\begin{equation*}
			A_{3}\lesssim \int_{(T_{-},T^{+})\times \R^{3}}\chi^{2}(t)h(t)^{-2a}(1+|x|^{2})^{-k}(|u_2|^{2}|u|^{2}+|u_2|^{2}|\nabla u|^{2}+|\nabla u_2|^{2}|u|^{2})dxdt.
		\end{equation*}
		Using the same techniques as in estimating $A_{12}$, we also have 
		\begin{align*}
			&A_2+A_3\\
			&\lesssim \big(\|u_1\|_{L^{\infty}_{t,x}}^{2}+\|\nabla\times u_1\|_{L^{\infty}_{t,x}}^{2}+\|u_2\|_{L^{\infty}_{t,x}}^{2}+\|\nabla\times u_2\|_{L^{\infty}_{t,x}}^{2}\big)\int_{(T_{-},T^{+})\times\R^{3}}\chi^{2}(t)h(t)^{-2a}(|u|^{2}+|\nabla u|^{2})dxdt.
		\end{align*}
		All in all, we have
		\begin{align*}
			&A=A_{1}+A_{2}+A_{3}\\
			&\lesssim \big(\|u_1\|_{L^{\infty}_{t,x}}^{2}+\|\nabla\times u_1\|_{L^{\infty}_{t,x}}^{2}+\|u_2\|_{L^{\infty}_{t,x}}^{2}+\|\nabla\times u_2\|_{L^{\infty}_{t,x}}^{2}\big)\int_{(T_{-},T^{+})\times\R^{3}}\chi^{2}h^{-2a}(|u|^{2}+|\nabla u|^{2})dxdt\\
			&\lesssim T^+\big(\|u_1\|_{L^{\infty}_{t,x}}^{2}+\|\nabla\times u_1\|_{L^{\infty}_{t,x}}^{2}+\|u_2\|_{L^{\infty}_{t,x}}^{2}+\|\nabla\times u_2\|_{L^{\infty}_{t,x}}^{2}\big)\int_{(T_{-},T^{+})\times\R^{3}}\chi^{2}h^{-(2a+1)}(|u|^{2}+|\nabla u|^{2})dxdt.
		\end{align*}
		Since $T^{+}$ can be chosen sufficenlty small, we have $$A\ll \int_{(T_{-},T^{+})\times \R^{3}}\chi^{2}(t)h(t)^{-(2a+1)}(1+|x|^{2})^{-k}\big((a+1)|u|^{2}+|\nabla u|^{2}\big)dxdt.$$
		As a result, we have
		\begin{align*}
			\int_{(T_{-},T^{+})\times \R^{3}}\chi^{2}(t)h(t)^{-(2a+1)}(1+|x|^{2})^{-k}\big((a+1)|u|^{2}+|\nabla u|^{2}\big)dxdt\\
			\lesssim \int_{(T_{-},T^{+})\times \R^{3}}|\chi'(t)|^{2}h(t)^{-2a}(1+|x|^{2})^{-k}|u|^{2}dxdt.
		\end{align*}
		By the definition of $\chi$,
		\begin{align*}
			&h(T_{-}+\delta/2)^{-(2a+1)}\int_{(T_{-},T_{-}+\frac{\delta}{2})\times\R^{3}}(1+|x|^{2})^{-k}|u|^{2}dxdt\\
			\leqslant &\int_{(T_{-},T^{+})\times \R^{3}}\chi^{2}(t)h(t)^{-(2a+1)}(1+|x|^{2})^{-k}|u|^{2}dxdt\\
			\lesssim &\int_{(T_{-},T^{+})\times \R^{3}}|\chi'(t)|^{2}h(t)^{-(2a+1)}(1+|x|^{2})^{-k}|u|^{2}dxdt\\
			\lesssim &h(T_{-}+\delta)^{-(2a+1)} \int_{(T_{-}+\delta,T_{-}+2\delta)\times \R^{3}}(1+|x|^{2})^{-k}|u|^{2}dxdt.
		\end{align*}
		Letting $a\to\infty$, we obtain $u\equiv 0$ on $[T_{-},T_{-}+\delta]$, or equivalently, $u_1\equiv u_2$  on $[T-\delta, T]$. Clearly, $\delta$ is independent of the terminal time $T$. Repeating the procedure for finite times, we obtain $u_1$ must coincide with $u_2$ at any time $t<T$.
	\end{proof}
	
	\section*{Acknowledgement}
	This work has been supported by the New Cornerstone Science Foundation through the XPLORER PRIZE, Sino-German Center Mobility Programme (Project No. M-0548), and Shanghai Science and Technology Program (Project No. 21JC1400600 and No. 19JC1420101).

	\frenchspacing
	\bibliographystyle{plain}
	\bibliography{backward_uniqueness_of_Navier-Stokes}

\begin{thebibliography}{10}

\bibitem{1997C1}
Vilhelm Adolfsson and Luis Escauriaza.
\newblock {$C^{1,\alpha}$} domains and unique continuation at the boundary.
\newblock {\em Comm. Pure Appl. Math.}, 50(10):935--969, 1997.

\bibitem{1982CKN}
L.~Caffarelli, R.~Kohn, and L.~Nirenberg.
\newblock Partial regularity of suitable weak solutions of the
  {N}avier-{S}tokes equations.
\newblock {\em Comm. Pure Appl. Math.}, 35(6):771--831, 1982.

\bibitem{1998A}
X.~Y. Chen.
\newblock A strong unique continuation theorem for parabolic equations.
\newblock {\em Mathematische Annalen}, 311(4):603--630, 1998.

\bibitem{Escauriaza2000Carleman}
Escauriaza and Luis.
\newblock Carleman inequalities and the heat operator.
\newblock {\em Duke Mathematical Journal}, 104(1):488--90, 2000.

\bibitem{2003Unique}
L.~Escauriaza and FJ~Fernández.
\newblock Unique continuation for parabolic operators.
\newblock {\em Arkiv för Matematik}, 41(1):35--60, 2003.

\bibitem{2004Backward}
L.~Escauriaza, G.~Seregin, and V.~\v{S}ver\'{a}k.
\newblock Backward uniqueness for parabolic equations.
\newblock {\em Arch. Ration. Mech. Anal.}, 169(2):147--157, 2003.

\bibitem{0Carleman}
Luis Escauriaza and Luis Vega.
\newblock {\em Carleman inequalities and the heat operator. {II}}, volume~50.
\newblock 2001.

\bibitem{2014GL}
Loukas Grafakos.
\newblock {\em Classical {F}ourier analysis}, volume 249 of {\em Graduate Texts
  in Mathematics}.
\newblock Springer, New York, third edition, 2014.

\bibitem{2008HLL}
Thomas~Y. Hou, Zhen Lei, and Congming Li.
\newblock Global regularity of the 3{D} axi-symmetric {N}avier-{S}tokes
  equations with anisotropic data.
\newblock {\em Comm. Partial Differential Equations}, 33(7-9):1622--1637, 2008.

\bibitem{2008HL}
Thomas~Y. Hou and Congming Li.
\newblock Dynamic stability of the three-dimensional axisymmetric
  {N}avier-{S}tokes equations with swirl.
\newblock {\em Comm. Pure Appl. Math.}, 61(5):661--697, 2008.

\bibitem{2002On}
L.~Iskauriaza, G.~A. Ser\"{e}gin, and V.~Shverak.
\newblock {$L_{3,\infty}$}-solutions of {N}avier-{S}tokes equations and
  backward uniqueness.
\newblock {\em Uspekhi Mat. Nauk}, 58(2(350)):3--44, 2003.

\bibitem{Carlos2011An}
Carlos~E. Kenig and Gabriel~S. Koch.
\newblock An alternative approach to regularity for the {N}avier-{S}tokes
  equations in critical spaces.
\newblock {\em Ann. Inst. H. Poincar\'{e} C Anal. Non Lin\'{e}aire},
  28(2):159--187, 2011.

\bibitem{2009SS}
Gabriel Koch, Nikolai Nadirashvili, Gregory~A. Seregin, and Vladimir
  \v{S}ver\'{a}k.
\newblock Liouville theorems for the {N}avier-{S}tokes equations and
  applications.
\newblock {\em Acta Math.}, 203(1):83--105, 2009.

\bibitem{2015L}
Zhen Lei.
\newblock On axially symmetric incompressible magnetohydrodynamics in three
  dimensions.
\newblock {\em J. Differential Equations}, 259(7):3202--3215, 2015.

\bibitem{2016RG}
Pierre~Gilles Lemari\'{e}-Rieusset.
\newblock {\em The {N}avier-{S}tokes problem in the 21st century}.
\newblock CRC Press, Boca Raton, FL, 2016.

\bibitem{2012LSV}
Lu~Li and Vladim\'{\i}r \v{S}ver\'{a}k.
\newblock Backward uniqueness for the heat equation in cones.
\newblock {\em Comm. Partial Differential Equations}, 37(8):1414--1429, 2012.

\bibitem{1990A}
Fang-Hua Lin.
\newblock A uniqueness theorem for parabolic equations.
\newblock {\em Comm. Pure Appl. Math.}, 43(1):127--136, 1990.

\bibitem{2011MN}
Noriko Mizoguchi.
\newblock Nonexistence of type {II} blowup solution for a semilinear heat
  equation.
\newblock {\em J. Differential Equations}, 250(1):26--32, 2011.

\bibitem{Palasek_2021}
Stan Palasek.
\newblock Improved quantitative regularity for the {N}avier-{S}tokes equations
  in a scale of critical spaces.
\newblock {\em Arch. Ration. Mech. Anal.}, 242(3):1479--1531, 2021.

\bibitem{Palasek_2022}
Stan Palasek.
\newblock A minimum critical blowup rate for the high-dimensional
  {N}avier-{S}tokes equations.
\newblock {\em J. Math. Fluid Mech.}, 24(4):Paper No. 108, 28, 2022.

\bibitem{1996P}
Chi-Cheung Poon.
\newblock Unique continuation for parabolic equations.
\newblock {\em Comm. Partial Differential Equations}, 21(3-4):521--539, 1996.

\bibitem{2019Quantitative}
Terence Tao.
\newblock Quantitative bounds for critically bounded solutions to the
  {N}avier-{S}tokes equations.
\newblock In {\em Nine mathematical challenges---an elucidation}, volume 104 of
  {\em Proc. Sympos. Pure Math.}, pages 149--193. Amer. Math. Soc., Providence,
  RI, [2021] \copyright 2021.

\bibitem{2008W}
Changyou Wang.
\newblock Heat flow of harmonic maps whose gradients belong to
  {$L^n_xL^\infty_t$}.
\newblock {\em Arch. Ration. Mech. Anal.}, 188(2):351--369, 2008.

\end{thebibliography}

\end{document}